% Use 20F65, 20F10, 20E45
\documentclass{amsart}
\usepackage{hyperref}
\hypersetup{bookmarksnumbered} % Adds numbers to the section titles in the PDF table of contents
\usepackage{hyperref}
\usepackage{amssymb,amsmath,amsthm}
\usepackage{graphicx}
\usepackage[subrefformat=parens,labelformat=parens]{subfig}

\usepackage{ulem}  % remove in final version
\usepackage{color} % remove in final version
     % remove in final version
     % remove in final version

\newtheorem{theorem}{Theorem}
\newtheorem*{theorem2}{Theorem 2}
\newtheorem{proposition}[theorem]{Proposition}
\newtheorem{lemma}[theorem]{Lemma}
\newtheorem{corollary}[theorem]{Corollary}

\theoremstyle{definition}

\theoremstyle{remark}
\newtheorem{remark}[theorem]{Remark}

\newcommand{\newword}[1]{\textbf{#1}}
\newcommand{\Finfty}{\mathrm{F}_{\!\hspace{0.04em}\infty}}
\newcommand{\C}{\mathfrak{C}}
\newcommand{\Z}{\mathbb{Z}}
\newcommand{\N}{\mathbb{N}}
\newcommand{\Fix}{\mathrm{Fix}}
\newcommand{\type}{\mathrm{type}}

\begin{document}

\title[Stabilizers in Higman--Thompson groups]{Stabilizers in Higman--Thompson groups}
%\author{James Belk \and Francesco Matucci}
\author{James Belk}
\address{School of Mathematics \& Statistics, University of St Andrews, St Andrews, KY16~9SS, Scotland.}
\email{\href{mailto:jmb42@st-andrews.ac.uk}{jmb42@st-andrews.ac.uk}}

\thanks{The first author has been partially supported by EPSRC grant EP/R032866/1 during the creation of this paper.}

\author{James Hyde}
\address{Department of Mathematics, Cornell University, Ithaca, New York 14853.}
\email{\href{mailto:jth263@cornell.edu}{jth263@cornell.edu}}

\author{Francesco Matucci}
\address{Dipartimento di Matematica e Applicazioni, Universit\`{a} degli Studi di Milano--Bicocca, Milan 20125, Italy.}
\email{\href{mailto:francesco.matucci@unimib.it}{francesco.matucci@unimib.it}}
\thanks{The third author is a member of the Gruppo Nazionale per le Strutture Algebriche, Geometriche e le loro Applicazioni (GNSAGA) of the Istituto Nazionale di Alta Matematica (INdAM) and gratefully acknowledges the support of the 
Funda\c{c}\~ao para a Ci\^encia e a Tecnologia  (CEMAT-Ci\^encias FCT projects UIDB/04621/2020 and UIDP/04621/2020) and of the Universit\`a degli Studi di Milano--Bicocca
(FA project ATE-2016-0045 ``Strutture Algebriche'').
}

\begin{abstract}
We investigate stabilizers of finite sets of rational points in Cantor space for the Higman--Thompson groups~$V_{n,r}$.  We prove that the pointwise stabilizer is an iterated ascending HNN extension of $V_{n,q}$ for any $q\geq 1$. We also prove that the commutator subgroup of the pointwise stabilizer is simple, and we compute the abelianization.  Finally, for each $n$ we classify such pointwise stabilizers up to isomorphism.
\end{abstract}

\maketitle

In recent work, Golan and Sapir use iterated ascending HNN extensions to describe the stabilizer of any finite set of rational points in $(0,1)$, and they classify such stabilizers up to isomorphism~\cite{GoSa}.  Here we record analogous results for stabilizers in Thompson's group~$V$, and more generally for the Higman--Thompson group~$V_{n,r}$.

For $n\geq 2$ and $r\geq 1$, let $\C_{n,r}$ denote the Cantor space $\{1,\ldots,r\}\times\{1,\ldots,n\}^\omega$, and let $V_{n,r}$ be the associated Higman--Thompson group (denoted $G_{n,r}$ in \cite{Brown,Hig2}).  A point in $\C_{n,r}$ is said to be \newword{rational} if the associated sequence is eventually repeating.  If $S$ is a finite set of rational points in $\C_{n,r}$, the 
\newword{stabilizer} $\mathrm{Stab}(S)$ is the group of elements of $V_{n,r}$ that fix $S$ setwise, the
\newword{pointwise stabilizer} $\Fix(S)$ is the subgroup of elements that fix~$S$ pointwise. Let $\Fix_0(S)$ be the subgroup consisting of elements that are the identity in some neighborhood of~$S$.  

Our main theorem is the following:

\begin{theorem} \label{thm:MainTheorem}
Let $n\geq 2$ and $q,r\geq 1$, and let $S\subset \C_{n,r}$ be a finite, nonempty set of rational points.  Then:
\begin{enumerate}
    \item $\Fix(S)$ is an iterated ascending HNN extension of~$V_{n,q}$.
    \smallskip
   \item $\Fix(S)$ and\/ $\mathrm{Stab}(S)$ have type\/~$\Finfty$.\smallskip
    \item The commutator subgroup of\/ $\Fix(S)$ is simple, and is the intersection of $\Fix_0(S)$ with the commutator subgroup of~$V_{n,r}$.\smallskip
    \item The abelianization of\/ $\Fix(S)$ is the direct sum of\/ $\Z^{|S|}$ with the abelianization of~$V_{n,r}$.
\end{enumerate}
\end{theorem}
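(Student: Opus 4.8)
The plan is to route everything through a single homomorphism. Fix a rational point $x=w\overline u\in\C_{n,r}$ with $u$ of minimal period; every $g\in V_{n,r}$ fixing $x$ agrees on a small cone $C_{wu^a}$ about $x$ with a prefix replacement $wu^a\sigma\mapsto wu^{a'}\sigma$, and the integer $\rho_x(g):=a'-a$ is independent of the choice of $a$, defines a homomorphism $\rho_x\colon\Fix(x)\to\Z$, and vanishes exactly on the elements that are the identity near $x$. Assembling these over $S=\{x_1,\dots,x_k\}$ gives a homomorphism $\rho=(\rho_{x_1},\dots,\rho_{x_k})\colon\Fix(S)\to\Z^k$ with $\ker\rho=\Fix_0(S)$, and $\rho$ is surjective because for each $i$ one can build a ``local contraction'' $t_i$ at $x_i$, supported on a tiny cone about $x_i$ that misses the remaining points, with $\rho_{x_i}(t_i)=1$; the $t_i$ can moreover be chosen to commute pairwise. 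Thus we have a short exact sequence $1\to\Fix_0(S)\to\Fix(S)\xrightarrow{\rho}\Z^k\to1$, and all four parts will be extracted from it together with classical facts about $V_{n,r}$: that it has type $\Finfty$ (Brown), that $[V_{n,r},V_{n,r}]$ is simple, and the structure of its abelianization (Higman).

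For part (1), fix $q\ge1$. The heart of the matter is to arrange the contractions $t_1,\dots,t_k$ and a clopen set $Z\subseteq\C_{n,r}$ which is disjoint from $S$, is a disjoint union of \emph{exactly $q$} cones, and satisfies $t_i^{-1}(Z)\subsetneq Z$ for every $i$ together with $\bigcup\{t_1^{m_1}\cdots t_k^{m_k}(Z):m_1,\dots,m_k\ge 0\}=\C_{n,r}\setminus S$. Such data exist: one takes $Z$ to be the complement of a clopen neighbourhood $W$ of $S$ that is ``swept into $S$'' by the $t_i$'s, and then exploits two independent knobs --- one can change the number of cones composing $W$ (hence the residue of $\#Z$ modulo $n-1$) by rerouting the $t_i$'s near $S$ so that a larger or smaller neighbourhood is swept in, and change the absolute size of $Z$ by passing to deeper cones --- so as to make $Z$ consist of exactly $q$ cones (for very small $q$ one checks directly that a single well-chosen cone works, using a $t_i$ that mixes $\C_{n,r}\setminus\{x_i\}$ under forward iteration). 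Let $B:=\{g\in V_{n,r}:\supp(g)\subseteq Z\}\cong V_{n,q}$, set $G_0:=B$ and $G_i:=\langle G_{i-1},t_i\rangle$. Using that $t_i$ commutes with $t_1,\dots,t_{i-1}$ and that $t_i^{-1}(Z)\subseteq Z$, conjugation by $t_i$ carries $G_{i-1}$ properly into itself, so each $G_i$ is a genuine ascending HNN extension of $G_{i-1}$; the resulting HNN presentations have no hidden relations because $\rho_{x_i}$ detects the $t_i$-exponent, so the abstract iterated ascending HNN extension maps isomorphically onto $G_k$. Finally $G_k$ contains every element of $V_{n,r}$ supported on some $t_1^{m_1}\cdots t_k^{m_k}(Z)$, hence --- supports being compact --- all of $\Fix_0(S)$, so $G_k=\langle\Fix_0(S),t_1,\dots,t_k\rangle=\Fix(S)$.

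Parts (2)--(4) are then short. For (2) I will use that an ascending HNN extension of a group of type $F_n$ again has type $F_n$: if $B$ has a $K(B,1)$ with finite $n$-skeleton, the mapping torus of a cellular self-map of that $K(B,1)$ inducing $\phi$ on $\pi_1$ is a $K(B\ast_\phi,1)$ (its $\Z$-cover is a mapping telescope, hence has vanishing higher homotopy) with finite $n$-skeleton. Iterating along the tower of part (1), and using Brown's theorem that $V_{n,q}$ has type $\Finfty$, gives the statement for $\Fix(S)$, and then for $\mathrm{Stab}(S)$ since $\mathrm{Stab}(S)/\Fix(S)$ is finite. For (3), set $N:=\Fix_0(S)\cap[V_{n,r},V_{n,r}]$. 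Since $\Fix(S)/\Fix_0(S)$ is abelian and commutators land in $[V_{n,r},V_{n,r}]$, we have $[\Fix(S),\Fix(S)]\subseteq N$; as $[\Fix(S),\Fix(S)]$ is a nonzero normal subgroup of $N$, it equals $N$ once $N$ is known to be simple. Simplicity of $N$ is Higman's standard commutator-and-fragmentation argument adapted here: in a nonzero normal subgroup pick $1\ne g$, choose a cone $C$ disjoint from $S$ with $g(C)\cap C=\varnothing$ and $g(C)$ also disjoint from $S$, and push elements of $N$ supported on small cones disjoint from $S$ past $g$ to produce all commutators of such elements, which generate $N$; everything can be kept away from $S$ precisely because elements of $N$ are trivial near $S$. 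Finally (4): by (3), $\Fix(S)^{\mathrm{ab}}=\Fix(S)/N$ fits in $1\to\Fix_0(S)/N\to\Fix(S)/N\to\Z^{k}\to1$, where $\Fix_0(S)/N\cong\Fix_0(S)[V_{n,r},V_{n,r}]/[V_{n,r},V_{n,r}]$ is all of $V_{n,r}^{\mathrm{ab}}$ (a suitable transposition of two cones far from $S$ already surjects onto it), and since $\Z^k$ is free abelian this extension splits, giving $\Fix(S)^{\mathrm{ab}}\cong\Z^{|S|}\oplus V_{n,r}^{\mathrm{ab}}$.

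The main obstacle is part (1), and specifically the freedom ``for any $q\ge1$'': producing the region $Z$ with exactly $q$ cones --- rather than only for $q$ in the residue class of $r-|S|$ modulo $n-1$, which is what the naive choice of shrinking cones around $S$ yields --- requires the careful, somewhat delicate choice of the contractions $t_i$ and of $Z$ described above, together with the verification that the tower $G_0\le\dots\le G_k$ is built of honest ascending HNN extensions and that $G_k=\Fix(S)$. Once that structural theorem is in place, the higher-finiteness, simplicity, and abelianization statements follow formally from the exact sequence and essentially classical input about $V_{n,r}$.
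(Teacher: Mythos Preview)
Your proposal is correct, and the germ homomorphism $\rho\colon\Fix(S)\to\Z^{|S|}$ with kernel $\Fix_0(S)$ is exactly the tool the paper uses as well; parts (2) and (4) proceed essentially identically in both.

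For part~(1) your organisation differs from the paper's. The paper argues by induction on~$|S|$: fixing one $s\in S$ and an attracting element $f$ at~$s$, it chooses a single clopen neighbourhood $T\ni s$ with $\type(T)\equiv r-q\pmod{n-1}$, so that $H=\Fix(S\cup T)$ sits inside $\Fix(T)\cong V_{n,q}$ as the fixator of $|S|-1$ rational points there, and then checks the algebraic conditions ($\langle f\rangle\cap H=1$, $f^{-1}Hf\le H$, exhaustion $\bigcup f^iHf^{-j}=\Fix(S)$) that make $\Fix(S)$ an ascending HNN extension of~$H$; induction finishes. You instead build all $k$ commuting contractions $t_1,\dots,t_k$ and a single base region $Z$ at once, then climb the tower $G_0\le\cdots\le G_k$. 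Both work. The paper's one-point-at-a-time approach makes the type control completely transparent, since only one clopen set needs its type adjusted per step; in your approach the ``rerouting'' of the $t_i$ to control $\type(Z)$ is exactly the step you flag as delicate and leave underspecified. In fact you only need $\type(Z)\equiv q\pmod{n-1}$, not literally $q$ cones, and this can always be achieved with disjointly supported $t_i$ by taking each $W_i$ to be a clopen neighbourhood of $x_i$ of any prescribed type---so your worry about ``very small~$q$'' and non-local $t_i$ is unnecessary (and would in any case endanger the commutation you rely on).

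For part~(3) the paper gets both the equality and simplicity more cheaply than your fragmentation sketch: writing $\Fix_0(S)$ as an ascending union of subgroups $G_i\cong V_{n,t_i}$ supported on clopen $E_i\subset\C_{n,r}\setminus S$, it shows $[G_i,G_i]=G_i\cap[V_{n,r},V_{n,r}]$ (using that order-two elements are odd when $n$ is odd), whence $N=\bigcup_i[G_i,G_i]\subseteq[\Fix(S),\Fix(S)]$ directly, and simplicity follows since an ascending union of simple groups is simple. Your route---prove $N$ simple first, then use that $[\Fix(S),\Fix(S)]$ is a nontrivial normal subgroup---is valid but circuitous by comparison.
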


For (1), recall that a group $G$ is an \newword{ascending HNN~extension} of a subgroup~$H$ if $G$ has presentation
\[
G = \langle H,t \mid tht^{-1}=\varphi(h)\text{ for all }h\in H\rangle
\]
where $\varphi\colon H\to H$ is a monomorphism. More generally, $G$ is an \newword{iterated ascending HNN extension} of $H$ if there exists a chain
\[
H=H_0 \leq H_1 \leq \cdots \leq H_n = G
\]
of subgroups of $H$ such that each $H_i$ is an ascending HNN extension of $H_{i-1}$.  We prove (1) in Section~\ref{sec:HNNExtensions} below.

For (2), it is well known that an ascending HNN extension of a group of type~$\Finfty$ has type $\Finfty$.  Since the groups $V_{n,r}$ all have type $\Finfty$ by a result of Brown \mbox{\cite[Theorem~7.3.1]{Brown}}, it follows from statement~(1) above that $\Fix(S)$ always has type~$\Finfty$ when $S$ is a finite set of rational points.   Since $\Fix(S)$ has finite index in~$\mathrm{Stab}(S)$, the stabilizer has type~$\Finfty$ as well, and thus (2) follows from~(1). The authors will use these results in~\cite{BHM} to prove finiteness properties for certain interesting groups, including a class of R\"{o}ver--Nekrashevych groups.

For parts (3) and (4), recall that Higman proved that the commutator subgroup of $V_{n,r}$ is always simple, and that the abelianization of $V_{n,r}$ is trivial if $n$ is even and cyclic of order two if $n$ is odd \cite[Chapter~5]{Hig2}.  Thus the abelianization of $\Fix(S)$ is $\Z^{|S|}$ if $n$ is even and $\Z^{|S|}\oplus \Z_2$ if $n$ is odd.  We prove (3) and (4) in Theorems~\ref{thm:CommutatorSubgroup} and~\ref{thm:Abelianization} below.

In addition to the above results, the following theorem classifies the isomorphism types of the groups~$\Fix(S)$.

\begin{theorem} Let $n\geq 2$ and $q,r\geq 1$, and let $S\subset \C_{n,q}$ and $S'\subset \C_{n,r}$ be finite, nonempty sets of rational points.  Then\/ $\Fix(S)$ and\/ $\Fix(S')$ are isomorphic if and only if\/~$|S|=|S'|$.
\end{theorem}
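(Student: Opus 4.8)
Throughout, $n$ is fixed. The ``only if'' direction is immediate from part~(4) of the Main Theorem: since Higman proved $\mathrm{ab}(V_{n,r})$ is finite (trivial if $n$ is even, of order $2$ if $n$ is odd), the group $\mathrm{ab}(\Fix(S))\cong\Z^{|S|}\oplus\mathrm{ab}(V_{n,r})$ has torsion-free rank exactly $|S|$, and torsion-free rank of the abelianization is an isomorphism invariant; hence $\Fix(S)\cong\Fix(S')$ forces $|S|=|S'|$.

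For the ``if'' direction I would show that, for each $k\geq 1$, every $\Fix(S)$ with $|S|=k$ is isomorphic to a single group $\mathcal{G}_{n,k}$. The starting point is that $\Fix_0(S)$ does not depend on $S$ at all. An element of $V_{n,r}$ lies in $\Fix_0(S)$ precisely when its support is a finite union of cones disjoint from $S$, so $\Fix_0(S)$ is the directed union, over tuples of cones $D_i\ni x_i$ shrinking to the points of $S$, of the subgroups $V\bigl(\C_{n,r}\setminus(D_1\cup\cdots\cup D_k)\bigr)\cong V_{n,m}$ with $m\to\infty$; refining the $D_i$ one child-cone at a time identifies this directed union with $\varinjlim\bigl(V_{n,1}\hookrightarrow V_{n,2}\hookrightarrow\cdots\bigr)$, each map being ``extend by the identity on one further cone''. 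This group --- call it $\mathcal{K}_n$ --- depends only on $n$, so $\Fix_0(S)\cong\mathcal{K}_n$ for every nonempty finite set of rational points in every $\C_{n,r}$.

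Next, the germ sequence $1\to\Fix_0(S)\to\Fix(S)\to\Z^{k}\to 1$ --- where each point of $S$ contributes a $\Z$ of germs generated by ``shift by one primitive period'' --- splits: choosing for each $x_i$ a shift element $t_i\in\Fix(S)$ supported on a cone $E_i\ni x_i$ with the $E_i$ pairwise disjoint, the $t_i$ commute and $\langle t_1,\dots,t_k\rangle\cong\Z^{k}$ maps isomorphically onto the quotient, so $\Fix(S)\cong\mathcal{K}_n\rtimes_{\alpha}\Z^{k}$ with $\alpha(e_i)$ equal to conjugation by $t_i$. (One checks from part~(4) that $\Fix_0(S)$ is the preimage in $\Fix(S)$ of the torsion subgroup of its abelianization, hence characteristic, so any isomorphism respects these decompositions; this is reassuring but not needed, since for ``if'' we build the isomorphism by hand.) It therefore suffices to prove that the actions $\alpha,\alpha'\colon\Z^{k}\to\mathrm{Aut}(\mathcal{K}_n)$ arising from $S$ and $S'$ agree up to an automorphism of $\mathcal{K}_n$ and an element of $GL_k(\Z)$, for then $\mathcal{K}_n\rtimes_{\alpha}\Z^{k}\cong\mathcal{K}_n\rtimes_{\alpha'}\Z^{k}$. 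Because the $t_i$ (and the $t_i'$) have pairwise disjoint supports, $(\alpha(e_1),\dots,\alpha(e_k))$ is a $k$-tuple of commuting automorphisms ``supported on disjoint pieces'', so the problem collapses to the single-generator statement: conjugation by the shift at a rational point $x$ induces an automorphism of $\mathcal{K}_n$ whose conjugacy class, modulo inner automorphisms, is independent of $x$ --- in particular of the period length of $x$ and of $r$.

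This single-generator statement is the main obstacle, and it is the analogue of the corresponding step in Golan--Sapir's treatment of $F$. Unwound, it says that the ``shift toward $x$'', a $V$-like self-homeomorphism of the punctured Cantor space $\C_{n,r}\setminus S$ that is the identity off a small cone about $x$, has a canonical conjugacy class. I would attack it via the ``annulus'' decomposition of a punctured neighbourhood of a rational point: each annulus is a disjoint union of a number of cones that is a multiple of $n-1$, so any two such decompositions --- for $x$ and $x'$, possibly of different period lengths and in different $\C_{n,r}$'s --- admit a $V$-like homeomorphism carrying annuli to annuli, and one then checks that such a homeomorphism can be chosen to conjugate the shift toward $x$ to the shift toward $x'$; no spurious $\ell$-th power intervenes because a region built from $\ell(n-1)$ cones is $V$-isomorphic to one built from $n-1$ cones, so a ``length-$\ell$-period shift'' is already $V$-conjugate to a ``one-letter shift''. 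The delicate part is carrying out this alignment --- of the annuli, of the ``bulk'' of $\C_{n,r}$ away from $S$, and of the cone-count discrepancy modulo $n-1$ caused by $r\neq r'$ --- simultaneously and compatibly with all $k$ shifts; this is possible because a countable disjoint union of cones can be $V$-rearranged freely, but making it precise is the technical heart of the argument.
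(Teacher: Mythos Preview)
Your ``only if'' direction is identical to the paper's: both read off $|S|$ as the free rank of the abelianization.

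For the ``if'' direction your approach is correct in outline but organised differently from the paper's. You pass through the semidirect product decomposition $\Fix(S)\cong\Fix_0(S)\rtimes\Z^{k}$, identify $\Fix_0(S)$ with a universal group $\mathcal{K}_n$ (this is exactly the content of the Remark following the paper's proof), and then aim to show that the $\Z^{k}$-action, built from $k$ commuting shifts $t_1,\dots,t_k$ with disjoint supports, is unique up to conjugacy in $\mathrm{Aut}(\mathcal{K}_n)$. The paper bypasses the semidirect-product framing entirely: invoking Corollary~\ref{cor:AttractingAtEachPoint}, it chooses a \emph{single} element $f\in\Fix(S)$ whose germ is the attracting generator at \emph{every} point of $S$ simultaneously (and likewise $f'$ for $S'$), then builds an explicit homeomorphism $h\colon\C_{n,q}\to\C_{n,r}$ by matching the global annuli $E_k=f^k(E)\setminus f^{k-1}(E)$ to $E_k'=(f')^k(E')\setminus (f')^{k-1}(E')$, after first adjusting so that $\type(E)=\type(E')$. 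A short germ computation then shows $h\,\Fix(S)\,h^{-1}=\Fix(S')$.

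The upshot: the geometric content --- your ``annulus decomposition'' --- is the same in both arguments, but the paper's device of using one $f$ rather than $k$ separate $t_i$'s eliminates precisely the step you flag as delicate, namely aligning all $k$ shifts simultaneously and compatibly. Your route would go through (the disjoint supports make the local conjugators assemble), but it carries more bookkeeping than necessary; the single-$f$ trick is worth knowing.
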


The proof of this theorem can be found at the end of Section~\ref{sec:commutator}.

\section{Background}
\label{sec:backgroud}

\subsection{Thompson-like homeomorphisms}
\label{ssec:thompson-like}
We view $\C_{n,r}$ as a space of infinite sequences of digits, where the first digit of each sequence lies in $\{1,\ldots,r\}$ and the remaining digits lie in $\{1,\ldots,n\}$.  If $\alpha\in \{1,\ldots,r\}\times\{1,\ldots,n\}^*$ is any finite prefix, the associated \newword{cone} $C_\alpha$  consists of all sequences in $\C_{n,r}$ that have $\alpha$ as a prefix.    Note that the clopen sets in $\C_{n,r}$ are precisely the finite unions of cones.

Given any two cones $C_\alpha\subseteq \C_{n,q}$ and $C_\beta\subseteq \C_{n,r}$, the corresponding \newword{prefix-replacement homeomorphism} from $C_\alpha$ to $C_\beta$ is the function that maps any word $\alpha\psi\in C_\alpha$ to the corresponding word $\beta\psi\in C_\beta$. If $U\subseteq \C_{n,q}$ and $U'\subseteq \C_{n,r}$ are open, we say that a homeomorphism $f\colon U\to U'$ is \newword{Thompson-like} if each point of $U$ is contained in a cone on which $f$ restricts to a prefix replacement.  The \newword{Higman--Thompson group $\boldsymbol{V_{n,r}}$} is the group of all Thompson-like homeomorphisms of~$\C_{n,r}$.

%Given a subset $S$ of ~$\C_{n,r}$ we say that two maps $f,g \in V_{n,r}$ \textbf{agree around $S$} if there exists a neighborhood $U_S$ of $S$ such that $f|_{U_S}=g|_{U_S}$ and we write $f \sim_S g$. By construction, $\sim_S$ is an equivalence relation. The \textbf{group of germs} $(V_{n,r})_S$ is the set
%\[
%\{ f \in V_{n,r} \mid f(s)=s, \text{ for any } s \in S\} / \sim_S
%\]
%endowed with the composition operation. If $S=\{s\}$ is a singleton, we simply write $(V_{n,r})_s$. It is easy to see that for any rational point $s$ the group $(V_{n,r})_s$ is the infinite cyclic group.

\subsection{Types of clopen sets}
\label{ssec:types}
Every clopen set $E\subseteq\C_{n,r}$ can be expressed as a finite, disjoint union $C_{\alpha_1}\uplus \cdots \uplus C_{\alpha_s}$ of cones.  Such a decomposition is not unique, since we can replace any cone $C_{\alpha_i}$ by the disjoint union of the $n$ subcones $\{C_{\alpha_ij}\}_{j=1}^n$.  However, any two decompositions of the same clopen set into cones must have the same number of cones modulo~$n-1$.  We refer to this element of $\Z_{n-1}$ as the \newword{type} of~$E$, denoted~$\mathrm{type}(E)$.  Note that any cone has type~$1$, the whole Cantor space $\C_{n,r}$ has type~$r$ (modulo~$n-1$), and
\[
\mathrm{type}(E_1\cup E_2) = \mathrm{type}(E_1) + \mathrm{type}(E_2) - \mathrm{type}(E_1\cap E_2)
\]
for any two clopen sets $E_1,E_2\subseteq \C_{n,r}$. As an immediate consequence, we see that
$\mathrm{type}(E \setminus F)=\mathrm{type}(E) - \mathrm{type}(E \cap F)$, for any clopen sets $E,F\subseteq \C_{n,r}$.

\begin{proposition}\label{prop:SameType}
Let $n\geq 2$, let $q,r\geq 1$, and let $E\subseteq \C_{n,q}$ and $E'\subseteq \C_{n,r}$ be clopen sets.  Then there exists a Thompson-like homeomorphism $f\colon E\to E'$ if and only if\/ $\type(E)=\type(E')$
\end{proposition}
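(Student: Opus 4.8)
The plan is to prove both directions. For the forward direction, suppose $f\colon E\to E'$ is a Thompson-like homeomorphism. Write $E=C_{\alpha_1}\uplus\cdots\uplus C_{\alpha_s}$ as a disjoint union of cones. By the definition of Thompson-like, each point of $E$ lies in a cone on which $f$ is a prefix replacement; by compactness of $E$ and by passing to subcones (which, by the remark in Section~\ref{ssec:types}, does not change the type of $E$, since subdividing one cone into $n$ adds $n-1\equiv 0$ to the count), we may refine the decomposition so that $f$ restricts to a prefix-replacement homeomorphism on each $C_{\alpha_i}$. Then $f(C_{\alpha_i})=C_{\beta_i}$ is a cone, and $E'=C_{\beta_1}\uplus\cdots\uplus C_{\beta_s}$ is a disjoint union of $s$ cones (disjoint because $f$ is a bijection). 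Hence $\type(E)=s\pmod{n-1}=\type(E')$.

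For the converse, suppose $\type(E)=\type(E')$. Decompose $E=C_{\alpha_1}\uplus\cdots\uplus C_{\alpha_s}$ and $E'=C_{\beta_1}\uplus\cdots\uplus C_{\beta_t}$ into cones. Then $s\equiv t\pmod{n-1}$, and the key step is to arrange $s=t$ by subdividing: replacing a single cone by its $n$ subcones increases the count by $n-1$, so by subdividing enough cones in whichever decomposition has the smaller count we may assume $s=t$. (One should check $s,t\geq 1$ or handle the empty case — the empty set has type $0$ and is matched with the empty homeomorphism; this is the only place a small case distinction is needed.) Once the two decompositions have equally many cones, define $f\colon E\to E'$ by sending $C_{\alpha_i}$ to $C_{\beta_i}$ via the prefix-replacement homeomorphism for each $i$. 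This $f$ is a homeomorphism since it is a bijection assembled from homeomorphisms on a finite clopen partition, and it is Thompson-like by construction, since each point lies in some $C_{\alpha_i}$ on which $f$ is a prefix replacement.

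The main obstacle I anticipate is not conceptual but bookkeeping: making precise that refining a cone decomposition (subdividing cones) genuinely leaves the type unchanged and can be done compatibly on both sides so that the two decompositions end up with the same number of cones. This rests on the well-definedness of type — that any two cone decompositions of the same clopen set agree modulo $n-1$ — which is asserted in Section~\ref{ssec:types}; I would either cite that directly or, if a self-contained argument is wanted, prove it by taking a common refinement of two decompositions and counting. A secondary subtlety is the compactness argument in the forward direction: a Thompson-like homeomorphism a priori only gives, for each point, \emph{some} cone on which $f$ is a prefix replacement, and one must extract a finite subcover and then pass to a common refinement with the chosen decomposition of $E$ so that $f$ is simultaneously a prefix replacement on each piece. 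Both of these are routine once stated carefully, so the proof should be short.
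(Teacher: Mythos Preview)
Your proposal is correct and follows essentially the same approach as the paper's proof: in both directions the argument is to arrange cone decompositions of $E$ and $E'$ with the same number of cones (refining via compactness in the forward direction, subdividing in the converse) and then count modulo $n-1$ or map cone to cone by prefix replacement. Your version is somewhat more explicit about the compactness step and the empty-set edge case, but there is no substantive difference in strategy.
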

\begin{proof}
If a Thompson-like homeomorphism $f:E \to E'$ exists, then there is a suitable subdivision of $E$ into cones so that their images are cones too, and therefore
$\mathrm{type}(E)=\mathrm{type}(E')$. Conversely, if $\mathrm{type}(E)=\mathrm{type}(E')$, there exists
a positive integer $m$ with
\[
m\equiv \type(E)\pmod{n-1}
\]
and suitable cones $C_1,\ldots,C_m\subseteq \C_{n,q}$ and $C_1',\ldots,C_m'\subseteq \C_{n,r}$ so that
$E=\biguplus_{i=1}^m C_i$
and $E'=\biguplus_{i=1}^m C_i'$. In this case, we can define a Thompson-like homeomorphism $f\colon E\to E'$ by mapping each $C_i$ to $C_i'$ by a prefix replacement.
\end{proof}

\begin{corollary}\label{cor:FixClopenSet}
Let $E\subset\C_{n,r}$ be a proper, nonempty clopen set, and let $q=\type(E)$.  Then\/ $\Fix(\C_{n,r}\setminus E)\cong V_{n,q}$.
\end{corollary}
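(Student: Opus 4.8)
The plan is to show that $\Fix(\C_{n,r}\setminus E)$ is naturally isomorphic to the group of all Thompson-like homeomorphisms of $E$ onto itself, and then to transport that group onto $V_{n,q}$ by conjugating with a Thompson-like homeomorphism $\C_{n,q}\to E$ supplied by Proposition~\ref{prop:SameType}.

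For the first step, I would argue as follows. An element $g\in\Fix(\C_{n,r}\setminus E)$ fixes $\C_{n,r}\setminus E$ pointwise, so it preserves $E$ setwise, and restricting $g$ to $E$ yields a Thompson-like homeomorphism of $E$. Conversely, given a Thompson-like homeomorphism $h\colon E\to E$, I would extend it by the identity on $\C_{n,r}\setminus E$. Since $E$ is clopen, $\C_{n,r}\setminus E$ is open and hence a union of cones, so every point of the complement has a cone neighbourhood on which the extension is the identity (a prefix replacement), while at points of $E$ the extension agrees with the Thompson-like map $h$; therefore the extension lies in $V_{n,r}$ and obviously fixes $\C_{n,r}\setminus E$ pointwise. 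Restriction and extension-by-identity are mutually inverse and respect composition, so $\Fix(\C_{n,r}\setminus E)$ is isomorphic to the group $\Gamma(E)$ of Thompson-like self-homeomorphisms of $E$.

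For the second step, note that $V_{n,q}=\Gamma(\C_{n,q})$ by definition, and $\type(\C_{n,q})=q=\type(E)$, so Proposition~\ref{prop:SameType} provides a Thompson-like homeomorphism $\varphi\colon\C_{n,q}\to E$. I would then check that conjugation $g\mapsto\varphi g\varphi^{-1}$ carries $\Gamma(\C_{n,q})$ isomorphically onto $\Gamma(E)$. This uses two closure facts: the inverse of a Thompson-like homeomorphism is Thompson-like (immediate, as the inverse of a prefix replacement is a prefix replacement), and a composite $h_2\circ h_1$ of Thompson-like homeomorphisms is Thompson-like. For the latter, given a point $x$ I would choose a cone $C_\alpha\ni x$ on which $h_1$ restricts to a prefix replacement onto a cone $C_\beta$, and a cone $C_\gamma$ on which $h_2$ restricts to a prefix replacement; since $h_1(x)\in C_\beta\cap C_\gamma$ and any two cones are nested or disjoint, one of $C_\beta,C_\gamma$ contains the other, and pulling the smaller one back through $h_1$ gives a subcone $C_{\alpha'}\ni x$ of $C_\alpha$ on which $h_2\circ h_1$ is a composite of two prefix replacements, hence a prefix replacement. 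Granting this, conjugation by $\varphi$ and conjugation by $\varphi^{-1}$ are mutually inverse homomorphisms, so $\Gamma(\C_{n,q})\cong\Gamma(E)$, and therefore $\Fix(\C_{n,r}\setminus E)\cong V_{n,q}$.

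I do not expect a genuine obstacle here: the corollary is essentially a repackaging of Proposition~\ref{prop:SameType}, and the only things needing care are the two closure properties of Thompson-like homeomorphisms and the fact that extending a self-homeomorphism of $E$ by the identity stays Thompson-like, all of which follow quickly from the nesting/disjointness of cones together with $E$ being clopen. One should also keep in mind that $q$ is only well defined as a residue modulo $n-1$; but any two positive representatives $q,q'$ satisfy $\type(\C_{n,q})=\type(\C_{n,q'})$, so $V_{n,q}\cong V_{n,q'}$ by the same conjugation argument and the statement is unambiguous.
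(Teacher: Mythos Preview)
Your proof is correct and follows essentially the same approach as the paper: the paper's two-sentence argument simply picks a Thompson-like homeomorphism $h\colon E\to\C_{n,q}$ from Proposition~\ref{prop:SameType} and declares $f\mapsto h(f|_E)h^{-1}$ to be the desired isomorphism, leaving implicit exactly the closure properties (inverses, composites, extension-by-identity) that you carefully verify. Your additional remark about the ambiguity of $q$ modulo $n-1$ is also well taken and not addressed explicitly in the paper.
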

\begin{proof}
By Proposition~\ref{prop:SameType}, there is a Thompson-like homeomorphism $h\colon E\to \C_{n,q}$.  Then $f\mapsto h(f|_E)h^{-1}$ defines an isomorphism $\Fix(\C_{n,r}\setminus E)\to V_{n,q}$.
\end{proof}

\subsection{Germs at fixed points}

If $p\in\C_{n,r}$ is a rational point, the \newword{group of germs} at $p$ is the quotient
\[
(V_{n,r})_p = \mathrm{Fix}(p) / \mathrm{Fix}_0(p).
\]
If $f\in \Fix(p)$, we let $(f)_p$ denote its image in $(V_{n,r})_p$.  Note that if $f,g\in \Fix(p)$, then $(f)_p=(g)_p$ if and only if $f$ and $g$ agree on a neighborhood of~$p$.

\begin{proposition}\label{prop:GroupOfGerms}
Let $p\in \C_{n,r}$ be a rational point, and write $p=\alpha\overline{\beta}$ for some finite sequences $\alpha$ and $\beta$, where $\beta$ is not a power of a shorter sequence.  Then there exists an $f\in V_{n,r}$ that agrees with the prefix replacement $\alpha\psi\mapsto \alpha\beta\psi$ in a neighborhood of~$p$, and $(V_{n,r})_p$ is the infinite cyclic group generated by~$(f)_p$
\end{proposition}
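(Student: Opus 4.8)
The plan is to produce an explicit $f\in V_{n,r}$ with the asserted local behaviour, then to show that its germ $(f)_p$ has infinite order, and finally that $(f)_p$ generates $(V_{n,r})_p$. For the construction, observe that the prefix replacement $\alpha\psi\mapsto\alpha\beta\psi$ is a Thompson-like homeomorphism from the cone $C_\alpha$ onto the subcone $C_{\alpha\beta}$, and that $p\in C_\alpha$. After replacing $\alpha$ by $\alpha\beta^{t}$ for a suitable $t$ --- which changes neither $p$ nor the hypothesis that $\beta$ is not a proper power --- we may assume $C_\alpha\neq\C_{n,r}$; since $n\geq 2$ and $\beta$ is nonempty, $C_{\alpha\beta}$ is then a proper subcone of $C_\alpha$, so $\C_{n,r}\setminus C_\alpha$ and $\C_{n,r}\setminus C_{\alpha\beta}$ are nonempty clopen sets, both of type $r-1$. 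By Proposition~\ref{prop:SameType} there is a Thompson-like homeomorphism between them, and gluing it to the prefix replacement $C_\alpha\to C_{\alpha\beta}$ yields an $f\in V_{n,r}$ that agrees with $\alpha\psi\mapsto\alpha\beta\psi$ on the neighbourhood $C_\alpha$ of $p$. Since $\beta\overline\beta=\overline\beta$ we have $f(p)=p$, so $f\in\Fix(p)$ and $(f)_p$ is defined.

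To see that $(f)_p$ has infinite order, note that $f$ carries $C_\alpha$ onto $C_{\alpha\beta}\subseteq C_\alpha$, so by induction $f^{m}(x)\in C_{\alpha\beta^{m}}$ for every $x\in C_\alpha$ and every $m\geq 0$. If a positive power $f^{k}$ were the identity on some neighbourhood of $p$, then, intersecting with $C_\alpha$, it would fix pointwise a neighbourhood $U\subseteq C_\alpha$ of $p$; choosing $x\in U$ with $x\neq p$ we would get $x=f^{jk}(x)\in C_{\alpha\beta^{jk}}$ for all $j\geq 0$, hence $x\in\bigcap_{j\geq 0}C_{\alpha\beta^{jk}}=\{p\}$, a contradiction. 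Since $\Fix_0(p)$ is a subgroup, no nonzero power of $f$ lies in it, and $(f)_p$ has infinite order.

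For generation, let $g\in\Fix(p)$. As $g\in V_{n,r}$, it agrees on some cone $C_\mu\ni p$ with a prefix replacement $\mu\psi\mapsto\nu\psi$, and $\mu,\nu$ are both prefixes of $p$ (since $p$ and $g(p)=p$ lie in $C_\mu$ and $C_\nu$ respectively). Shrinking the domain --- that is, replacing $\mu$ by a longer prefix of $p$ and $\nu$ by the corresponding extension --- we may assume $\mu=\alpha\beta^{a}$ for some $a\geq 0$; then $p=g(p)=\nu\overline\beta$, so $\nu$ is a prefix of $p$ with $p=\nu\overline\beta$. The combinatorial core is the claim that this forces $\nu=\alpha\beta^{b}$ for some $b\geq 0$, or else $\alpha=\nu\beta^{c}$ for some $c\geq 1$: comparing $\alpha\overline\beta=\nu\overline\beta$ shows that the longer of $\alpha,\nu$ is obtained from the shorter by appending a prefix $\delta$ of $\overline\beta$ with $\delta\overline\beta=\overline\beta$, i.e.\ with $|\delta|$ a period of $\overline\beta$; and since $\beta$ is not a proper power, a cyclic-shift argument on $\beta$ shows that the periods of $\overline\beta$ are precisely the multiples of $|\beta|$, so $\delta$ is a power of $\beta$. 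In the first case, $g$ already agrees near $p$ with $\alpha\beta^{a}\psi\mapsto\alpha\beta^{b}\psi$; in the second, appending $\beta^{c}$ to both $\mu$ and $\nu$ puts $g$ into this form with $b=0$ and $a$ increased by $c$. Iterating the local form of $f$ (and its inverse, when $b<a$) shows that $f^{\,b-a}$ agrees on $C_{\alpha\beta^{a}}$ with $\alpha\beta^{a}\psi\mapsto\alpha\beta^{b}\psi$, whence $(g)_p=(f)_p^{\,b-a}$. Therefore $(V_{n,r})_p=\langle(f)_p\rangle$, which is infinite cyclic by the previous paragraph.

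I expect the main obstacle to be the combinatorial claim in the last paragraph: one must rule out the possibility that $g$ shifts $p$ along its eventual period $\overline\beta$ by an amount not commensurable with $|\beta|$, and also handle the bookkeeping when the image prefix $\nu$ is shorter than $\alpha$, which forces $\alpha$ itself to terminate in a power of $\beta$. The remaining steps --- the gluing construction, the infinite-order argument, and the identification of the reduced local form with a power of $f$ --- are routine manipulations of cones and of the type invariant of Section~\ref{ssec:types}.
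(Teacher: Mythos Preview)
Your proof is correct and follows essentially the same strategy as the paper's: construct $f$ by gluing the prefix replacement to a Thompson-like homeomorphism of complements (using the type invariant and Proposition~\ref{prop:SameType}), then show that any $g\in\Fix(p)$ agrees near $p$ with some power of this prefix replacement. The paper's version is terser---it works with $C_{\alpha\beta}\to C_{\alpha\beta^2}$ (which is automatically proper, avoiding your passage to $\alpha\beta^t$), and it simply asserts without argument that any $g\in\Fix(p)$ acts near $p$ as $\alpha\psi\mapsto\alpha\beta^k\psi$ or its inverse; you supply the period argument for $\overline\beta$ that justifies this, and you also verify infinite order of $(f)_p$ explicitly, which the paper leaves implicit.
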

\begin{proof}
Note that $C_{\alpha\beta}$ has nonempty complement.  Since $C_{\alpha\beta}$ and $C_{\alpha\beta^2}$ have type~$1$, their complements have type~$r-1$, so by Proposition~\ref{prop:SameType} there exists a Thompson-like homeomorphism $h\colon \C_{n,r}\setminus C_{\alpha\beta} \to \C_{n,r}\setminus C_{\alpha\beta^2}$.  Let $f$ be the element of $V_{n,r}$ which agrees with $h$ on $\C_{n,r}\setminus C_{\alpha \beta}$ and maps $C_{\alpha\beta}$ to $C_{\alpha\beta^2}$ by a prefix replacement.  Then $f$ agrees with the prefix replacement $\alpha\psi\mapsto \alpha\beta\psi$ on~$C_{\alpha\beta}$.  

Now observe that if $g\in \Fix(p)$, then $g$ must act as a prefix replacement of the form $\alpha\psi\mapsto \alpha\beta^k\psi$ or $\alpha\beta^k\psi\mapsto \alpha\psi$ in a neighborhood of~$p$ for some $k\geq 0$.  In the first case, $g$ agrees with $f^k$ in a neighborhood of~$p$, and hence $(g)_p=(f)_p^k$.  In the second case, $g$ agrees with $f^{-k}$ in a neighborhood of~$p$, and hence $(g)_p=(f)_p^{-k}$. Thus $(V_{n,r})_p$ is precisely the infinite cyclic group generated by~$(f)_p$. 
\end{proof}

We refer to the germ $(f)_p$ described in Proposition~\ref{prop:GroupOfGerms} as the \newword{attracting generator} for $(V_{n,r})_p$.

We will need the following generalization of Proposition~\ref{prop:GroupOfGerms}.

\begin{proposition}\label{prop:AttractingGeneratorWithFixedSet}
Let $S\subset \C_{n,r}$ be a finite set of rational points, and let $s\in S$.  Then there exists an $f\in\Fix_0(S\setminus\{s\})$ that fixes $s$ such that $(f)_s$ is the attracting generator for~$(V_{n,r})_s$.
\end{proposition}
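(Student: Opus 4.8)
The plan is to construct $f$ by hand: let it act as a single prefix replacement on a small cone about $s$, and patch this with an auxiliary Thompson-like homeomorphism on the complement that is the identity on a neighborhood of every other point of $S$. First I would write $s=\alpha\overline{\beta}$ as in Proposition~\ref{prop:GroupOfGerms}, with $\beta$ not a power of a shorter word, and observe that the cones $C_{\alpha\beta^m}$ ($m\geq 1$) decrease to $\{s\}$; hence I can fix $m$ large enough that $C_{\alpha\beta^m}$ contains no point of $S\setminus\{s\}$. On $C_{\alpha\beta^m}$ the prefix replacement $\alpha\beta^m\psi\mapsto\alpha\beta^{m+1}\psi$ coincides with the map $\alpha\psi\mapsto\alpha\beta\psi$, so it fixes $s=\alpha\beta^m\overline{\beta}$ and its germ at $s$ is the attracting generator of $(V_{n,r})_s$; this cone and this local map will be the ``seed'' of $f$.

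Next I would choose the patching data on the complement. Since the points of $S\setminus\{s\}$ are distinct and all lie in the open set $\C_{n,r}\setminus C_{\alpha\beta^m}$, I can pick for each $t\in S\setminus\{s\}$ a cone $C_{\delta_t}\ni t$, with the $C_{\delta_t}$ pairwise disjoint and each disjoint from $C_{\alpha\beta^m}$, and chosen small enough that $D:=\biguplus_t C_{\delta_t}$ is a \emph{proper} clopen subset of $\C_{n,r}\setminus C_{\alpha\beta^m}$ (shrinking one of the cones to a subcone if necessary). Because $C_{\alpha\beta^{m+1}}\subseteq C_{\alpha\beta^m}$, the set $D$ is also disjoint from $C_{\alpha\beta^{m+1}}$.

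Now I want a Thompson-like homeomorphism $h\colon\C_{n,r}\setminus C_{\alpha\beta^m}\to\C_{n,r}\setminus C_{\alpha\beta^{m+1}}$ that restricts to the identity on $D$; it is enough to find a Thompson-like homeomorphism between $(\C_{n,r}\setminus C_{\alpha\beta^m})\setminus D$ and $(\C_{n,r}\setminus C_{\alpha\beta^{m+1}})\setminus D$ and extend by the identity on $D$. Using $\type(\C_{n,r})=r$, $\type(C_{\alpha\beta^m})=\type(C_{\alpha\beta^{m+1}})=1$ and $\type(E\setminus F)=\type(E)-\type(E\cap F)$, both of these clopen sets have type $(r-1)-\type(D)$, and both are nonempty by the choice of $D$, so Proposition~\ref{prop:SameType} delivers $h$. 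Finally I would define $f$ to equal the prefix replacement $\alpha\beta^m\psi\mapsto\alpha\beta^{m+1}\psi$ on $C_{\alpha\beta^m}$ and $h$ on the complement; this is a genuine element of $V_{n,r}$ since both pieces are Thompson-like and the domains $C_{\alpha\beta^m}\uplus(\C_{n,r}\setminus C_{\alpha\beta^m})$ and ranges $C_{\alpha\beta^{m+1}}\uplus(\C_{n,r}\setminus C_{\alpha\beta^{m+1}})$ partition $\C_{n,r}$. By construction $f$ is the identity on $D$, hence lies in $\Fix_0(S\setminus\{s\})$, it fixes $s$, and $(f)_s$ is the attracting generator.

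The argument is mostly bookkeeping; the one place to be careful is making the type computation legitimate, i.e.\ ensuring that the clopen sets handed to Proposition~\ref{prop:SameType} are nonempty — which is exactly why I insist on shrinking the $C_{\delta_t}$ so that $D$ does not exhaust $\C_{n,r}\setminus C_{\alpha\beta^m}$. The remaining verifications (disjointness of the chosen cones, the type identities, and that the patched map is Thompson-like) are routine.
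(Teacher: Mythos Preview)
Your proof is correct and follows essentially the same approach as the paper's. The paper also chooses a clopen neighborhood of $S\setminus\{s\}$ (your $D$, their $E$), shrinks the cone about $s$ until it is properly contained in the complement of that neighborhood, and then invokes Proposition~\ref{prop:SameType} on the two leftover pieces $\C_{n,r}\setminus(E\uplus C_\alpha)$ and $\C_{n,r}\setminus(E\uplus C_{\alpha\beta})$ to build the patching homeomorphism; your explicit care about nonemptiness matches the paper's requirement that $C_\alpha$ be \emph{properly} contained in $\C_{n,r}\setminus E$.
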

\begin{proof}
Let $E$ be a clopen neighborhood of $S\setminus \{s\}$ that does not contain~$s$.
Write $s=\alpha\overline{\beta}$, where $\alpha$ and $\beta$ are finite words and $\beta$ is not a power of a shorter word.  Replacing $\alpha$ with $\alpha\beta^k$ for some~$k$, we may assume that $C_\alpha$ is properly contained in $\C_{n,r}\setminus E$.  Since $\type(C_\alpha)=\type(C_{\alpha\beta})=1$, by the argument at the end of Subsection~\ref{ssec:types} we see that
\[
\type\bigl(\C_{n,r}\setminus(E\uplus C_{\alpha})\bigr) = \type\bigl(\C_{n,r}\setminus (E\uplus C_{\alpha\beta})\bigr)
\]
so by Proposition~\ref{prop:SameType} there is a Thompson-like homeomorphism
\[
h\colon \C_{n,r}\setminus(E\uplus C_{\alpha}) \to \C_{n,r}\setminus (E\uplus C_{\alpha\beta}).
\]
Then the element $f\in V_{n,r}$ that is the identity on~$E$, maps $C_\alpha$ to $C_{\alpha\beta}$ by a prefix replacement, and agrees with $h$ on $\C_{n,r}\setminus (E\uplus C_\alpha)$ has the desired properties. \end{proof}

\begin{corollary}\label{cor:AttractingAtEachPoint}
If $S\subset \C_{n,r}$ is a finite set of rational points, then there exists an $f\in \Fix(S)$ so that $(f)_s$ is the attracting generator for $(V_{n,r})_s$ for each $s\in S$.
\end{corollary}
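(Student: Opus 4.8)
The plan is to assemble the desired $f$ from the local models produced by Proposition~\ref{prop:AttractingGeneratorWithFixedSet}, one for each point of~$S$, chosen so that the factors do not interfere with one another. Write $S=\{s_1,\dots,s_k\}$. For each~$i$, Proposition~\ref{prop:AttractingGeneratorWithFixedSet} supplies an element $f_i\in\Fix_0(S\setminus\{s_i\})$ that fixes~$s_i$ and for which $(f_i)_{s_i}$ is the attracting generator for $(V_{n,r})_{s_i}$. In particular each $f_i$ fixes every point of~$S$, so $f_i\in\Fix(S)$, and by definition of $\Fix_0$ there is a clopen neighborhood $U_i$ of $S\setminus\{s_i\}$ on which $f_i$ is the identity.

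I would then set $f=f_1f_2\cdots f_k$. Since $\Fix(S)$ is a group, $f\in\Fix(S)$. To identify the germs, fix an index~$i$. For each $j\neq i$ we have $s_i\in S\setminus\{s_j\}$, so $f_j$ restricts to the identity on the neighborhood $U_j$ of~$s_i$; hence on the neighborhood $\bigcap_{j\neq i}U_j$ of~$s_i$ the product $f$ agrees with~$f_i$. Therefore $(f)_{s_i}=(f_i)_{s_i}$ is the attracting generator for $(V_{n,r})_{s_i}$, which is exactly what is claimed.

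There is essentially no obstacle here: all of the real work has already been done in Proposition~\ref{prop:AttractingGeneratorWithFixedSet}, and the only thing this corollary adds is the bookkeeping observation that, because each local model is the identity near the \emph{other} points of~$S$, the local models can be multiplied together into a single element of $\Fix(S)$ without disturbing the germs at those other points. If one wanted to avoid even invoking that the product lies in a group, one could instead note directly that $f$ fixes each $s_i$ (each factor does) and that near $s_i$ it coincides with $f_i$, which fixes $s_i$; either way the verification is routine once the neighborhoods $U_i$ are in hand.
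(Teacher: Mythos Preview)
Your proof is correct and follows exactly the same approach as the paper: apply Proposition~\ref{prop:AttractingGeneratorWithFixedSet} at each point of $S$ to get elements $f_i\in\Fix_0(S\setminus\{s_i\})$ with the right germ at~$s_i$, and take $f=f_1\cdots f_k$. The paper's proof is terser (it simply asserts that the product has the desired property), whereas you spell out why the germ of $f$ at each $s_i$ equals that of $f_i$, but the argument is identical.
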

\begin{proof}
Let $S=\{s_1,\ldots,s_m\}$.  By Proposition~\ref{prop:AttractingGeneratorWithFixedSet}, there exists for each $i$ an element $f_i\in\Fix_0(S\setminus\{s_i\})$ that fixes $s_i$ such that $(f_i)_{s_i}$ is the attracting generator for~$(V_{n,r})_{s_i}$.  Then the product $f=f_1\cdots f_m$ has the desired property.
\end{proof}

\section{Ascending HNN extensions}
\label{sec:HNNExtensions}

In this section we prove that pointwise stabilizers are ascending HNN extensions.  It follows from Bass-Serre theory (see~\cite{Serre}) that a group $G$ is an ascending HNN extension of a group $H$ if and only if $G$ acts by automorphisms on some directed tree $\Gamma$ with the following properties:
\begin{enumerate}
    \item $\Gamma$ has exactly one orbit of vertices under~$G$.\smallskip
    \item The stabilizer of each vertex of\/ $\Gamma$ is isomorphic to~$H$.\smallskip
    \item Each vertex of\/ $\Gamma$ has exactly one outgoing edge.
\end{enumerate}
The following proposition translates this geometry into algebra.

%\begin{proposition}\label{prop:AsceningHNNAction}
%Let $G$ and $H$ be groups.  Then $G$ is isomorphic to an ascending HNN extension of $H$ if and only if $G$ acts by automorphisms on some directed tree\/ $\Gamma$ with the following properties:
%\begin{enumerate}
    %\item $\Gamma$ has exactly one orbit of vertices under~$G$.\smallskip
    %\item The stabilizer of each vertex of\/ $\Gamma$ is isomorphic to~$H$.\smallskip
    %\item Each vertex of\/ $\Gamma$ has exactly one outgoing edge.\qed
%\end{enumerate}
%\end{proposition}

\begin{proposition}\label{prop:AlgebraicHNNCriterion}
Let $G$ be a group.  Let $t\in G$ have infinite order, let $H\leq G$, and suppose that
\begin{enumerate}
    \item $\langle t\rangle\cap H = 1$,\smallskip
    \item $t^{-1}Ht\leq H$, and\smallskip
    \item $\bigcup_{i,j\in\N} t^i H t^{-j}= G$.\smallskip
\end{enumerate}
Then $G$ is an ascending HNN extension of $H$ by~$t$.
\end{proposition}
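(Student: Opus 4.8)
The plan is to construct an explicit surjection from the abstract ascending HNN extension onto $G$ and then verify injectivity using the normal form for such extensions. Let $\widetilde{G} = \langle H, s \mid s h s^{-1} = \varphi(h) \text{ for all } h \in H\rangle$, where $\varphi \colon H \to H$ is the monomorphism defined by $\varphi(h) = t^{-1} h t$ (this is well-defined and injective by hypothesis (2), since conjugation by $t$ in $G$ is injective and lands in $H$). There is a homomorphism $\pi \colon \widetilde{G} \to G$ sending $H$ identically to $H \leq G$ and $s \mapsto t^{-1}$; it respects the defining relations because $t^{-1} h t = \varphi(h)$ in $G$. I would first show $\pi$ is surjective: hypothesis (3) says every $g \in G$ can be written $g = t^i h t^{-j}$ with $h \in H$ and $i, j \in \N$, and $t^i h t^{-j} = \pi(s^{-i} h s^{-j})$, so $g$ lies in the image.

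Next I would establish injectivity. Recall Britton's lemma / the normal form theorem for ascending HNN extensions: every element of $\widetilde{G}$ can be written as $s^{-i} h s^{j}$ with $i, j \in \N$ and $h \in H$, and such an expression is \emph{reduced} when it is not the case that $i > 0$, $j > 0$ simultaneously with $h \in \varphi(H)$ — equivalently, pushing $s$'s past $H$ using $h s = s \varphi(h)$ (valid for all $h \in H$) and $s^{-1}\varphi(h) = h s^{-1}$ lets one always reach a form $s^{-i} h s^{j}$ where we additionally cannot have both $i,j\geq 1$, i.e.\ at least one of $i,j$ is $0$, OR $h\notin\varphi(H)$. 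Two such normal forms $s^{-i}hs^{j}$ and $s^{-i'}h's^{j'}$ represent the same element only if $i=i'$, $j=j'$, and $h=h'$. So suppose $\pi(s^{-i} h s^{j}) = 1$ in $G$, i.e.\ $t^{i} h t^{-j} = 1$, and assume the word is in normal form. If $i = j = 0$ then $h = 1$ and we are done. Otherwise $h = t^{-i} t^{j} = t^{j-i} \in \langle t \rangle$; by hypothesis (1) this forces $h = 1$ and $i = j$. But then, if $i = j \geq 1$, the element $s^{-i} \cdot 1 \cdot s^{i}$ has $h = 1 \in \varphi(H)$ with both exponents positive — this contradicts the word being reduced, since $s^{-i} s^{i} = 1 = s^{-(i-1)} \cdot 1 \cdot s^{i-1}$ after one reduction step. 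Hence $i = j = 0$, the kernel of $\pi$ is trivial, and $\pi$ is an isomorphism, so $G$ is an ascending HNN extension of $H$ by $t$ (identifying $t$ with $\pi(s)^{-1}$; note $t$ has infinite order as assumed, which is automatic here anyway).

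The main obstacle is getting the normal form bookkeeping exactly right: in an ascending HNN extension one must be careful that $s$ conjugates \emph{into} $H$ but not onto it, so the collecting process only moves $s$'s in one direction freely and the reduced forms are $s^{-i} h s^{j}$ rather than the two-sided alternating words of a general HNN extension. I would either cite the standard normal form for ascending HNN extensions directly (it follows from Britton's lemma applied to the one-sided stable letter, or from Bass--Serre theory as sketched in the paragraph preceding the proposition) or spell out the short argument that any word in $H \cup \{s^{\pm 1}\}$ reduces to the claimed form and that a nontrivial reduced form maps to a nontrivial element — which is exactly the computation above. A cleaner alternative, matching the Bass--Serre remark just before the statement, is to let $G$ act on the Bass--Serre tree of $\widetilde{G}$ via $\pi$ and check the three geometric conditions directly; but since $\pi$ is manifestly surjective, the purely algebraic normal-form argument is the most economical route.
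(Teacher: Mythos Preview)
Your argument is correct (modulo a sign slip: in the surjectivity step you want $t^{i}ht^{-j}=\pi(s^{-i}hs^{\,j})$, not $s^{-j}$, since $\pi(s)=t^{-1}$), but it follows a genuinely different route from the paper. The paper builds the Bass--Serre tree directly: it takes the coset graph on $\{gH\}$ with a directed edge $gH\to gtH$, checks this is well defined using~(2), gets connectedness from~(3), and rules out directed cycles by showing from~(1) that the ascending union $K=\bigcup_{n}t^{n}Ht^{-n}$ meets $\langle t\rangle$ trivially, so $G$ decomposes as $\coprod_{j\in\Z}t^{j}K$ and edges strictly increase the $j$-index. You instead write down the universal ascending HNN extension $\widetilde G$ and a surjection $\pi\colon\widetilde G\to G$, then prove injectivity via normal forms. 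The paper's approach makes the link to the Bass--Serre characterization in the preceding paragraph explicit; yours is more self-contained algebra, and in fact can be streamlined: once you know every element of $\widetilde G$ is $s^{-i}hs^{j}$ and that $\pi(s^{-i}hs^{j})=1$ forces $h=1$ and $i=j$, the word $s^{-i}\cdot 1\cdot s^{i}$ is already trivial in $\widetilde G$, so Britton's lemma and the reduced-form bookkeeping are not actually needed at that final step.
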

\begin{proof}
Let $\Gamma$ be the directed graph whose vertices are the left cosets of $H$ in~$G$, with a directed edge from $gH$ to $gtH$ for every vertex~$gH$.  Note that if $gH=g'H$ for some $g,g'\in G$, then since $t^{-1}Ht\leq H$ we have $gtH=g'tH$, and thus $\Gamma$ has only one directed edge emanating from each vertex. Clearly $G$ acts on $\Gamma$ by automorphisms with one orbit of vertices and one orbit of edges, so it suffices to prove that $\Gamma$ is a directed tree.

Note first that $\Gamma$ is connected, for by condition (3) if $gH$ is any vertex of $\Gamma$ then $g t^j H = t^iH$ for some $i,j\in\N$, and hence $gH$ lies in the same component of $\Gamma$ as the vertex~$H$.  To prove that $\Gamma$ has no directed cycles, let $K$ be the ascending union of the subgroups $\{t^n Ht^{-n}\}_{n\in\N}$.  It follows from condition (1) that $K\cap \langle t\rangle=1$, so by condition (3) the group $G$ is the disjoint union of the cosets $\{t^j K\}_{j\in\Z}$, each of which is a disjoint union of cosets of~$H$.  Since $gtH \subseteq t^{j+1}K$ whenever $g H\subseteq t^jK$, we conclude that the graph $\Gamma$ has no directed cycles.  Since  every vertex of $\Gamma$ has only one outgoing edge, it follows that $\Gamma$ is a directed tree.\end{proof}

%Next, observe that if $p= \alpha\overline{\beta}$ is a rational point in $\C_{n,r}$, where $\alpha$ and $\beta$ are finite words and $\beta$ is not a power of any shorter word, then any element of $V_{n,r}$ that fixes $p$ must act as a prefix replacement of the form $\alpha\psi \mapsto \alpha \beta^k\psi$ or $\alpha\beta^k\psi\mapsto \alpha \psi$ in a neighborhood of~$p$.  In particular, if $f\in V_{n,r}$ and $f$ acts as $\alpha\psi \mapsto \alpha\beta \psi$ in a neighborhood of~$p$, then every element of $\Fix(\{p\})$ agrees with some power of $f$ in a neighborhood of~$p$.

We can now give a proof of the first part of Theorem~\ref{thm:MainTheorem}.

\begin{theorem}
\label{thm:ascending}
Let $n\geq 2$ and $q,r\geq 1$, and let $S$ be a finite, nonempty set of rational points in\/~$\C_{n,r}$.  Then\/ $\Fix(S)$ is an iterated ascending HNN extension of~$V_{n,q}$.
\end{theorem}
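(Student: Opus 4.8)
The plan is to prove the theorem by induction on $|S|$, using Proposition~\ref{prop:AlgebraicHNNCriterion} to peel off one point at a time. The base case $|S|=1$ establishes that $\Fix(p)$ is an ascending HNN extension of a group isomorphic to some $V_{n,q}$ (in fact $q = r$ will work, but any $q\geq 1$ follows since all the $V_{n,q}$ are... no — we should be careful: the statement allows \emph{any} $q\geq 1$, so we will need a separate observation that $V_{n,r}$ is itself an iterated ascending HNN extension of $V_{n,q}$ for any $q$, which is itself an instance of the $|S|=1$ analysis applied inside $V_{n,r}$, or more directly follows by choosing a clopen set of the appropriate type and applying Corollary~\ref{cor:FixClopenSet}).

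\medskip

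\textbf{The inductive step.} Suppose $S = \{s_1,\ldots,s_m\}$ with $m\geq 1$, and assume the result for sets of size $m-1$. Pick $s = s_m$ and set $S' = S\setminus\{s\}$. By Proposition~\ref{prop:AttractingGeneratorWithFixedSet} there is an element $t\in\Fix_0(S')$ fixing $s$ whose germ $(t)_s$ is the attracting generator of $(V_{n,r})_s$; this $t$ will be the stable letter. For the subgroup $H$, I would take the set of elements of $\Fix(S)$ that are the identity on a fixed cone $C_\alpha$ containing $s$ — more precisely, writing $s = \alpha\overline{\beta}$ as in Proposition~\ref{prop:GroupOfGerms} and arranging (by replacing $\alpha$ with $\alpha\beta^k$) that $C_\alpha$ is a small neighborhood of $s$ disjoint from $S'$ and on which $t$ acts by the prefix replacement $\alpha\psi\mapsto\alpha\beta\psi$, let
\[
H = \{\, g\in\Fix(S) : g \text{ is the identity on } C_{\alpha\beta}\,\}.
\]
Then $H\cong \Fix\bigl((\C_{n,r}\setminus C_{\alpha\beta})\text{-version of }S'\bigr)$, i.e. $H$ is isomorphic to the pointwise stabilizer of the $(m-1)$-point set $S'$ inside the Higman--Thompson group on $\C_{n,r}\setminus C_{\alpha\beta}$, which by Proposition~\ref{prop:SameType} and Corollary~\ref{cor:FixClopenSet} is (isomorphic to) $\Fix(\tilde S')$ for an $(m-1)$-point set $\tilde S'$ in some $\C_{n,q'}$; by the inductive hypothesis this is an iterated ascending HNN extension of $V_{n,q}$, and hence so is $G = \Fix(S)$ once we verify the HNN criterion.

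\medskip

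\textbf{Verifying the three conditions of Proposition~\ref{prop:AlgebraicHNNCriterion}.} Condition (1), $\langle t\rangle\cap H = 1$: if $t^k\in H$ for $k\neq 0$ then $t^k$ is the identity on $C_{\alpha\beta}$, but $t^k$ acts there by the prefix replacement $\alpha\psi\mapsto\alpha\beta^k\psi$ (for $k>0$) which is not the identity, and similarly for $k<0$; so $k=0$. Condition (2), $t^{-1}Ht\leq H$: if $g\in H$, then $t^{-1}gt$ is the identity on $t^{-1}(C_{\alpha\beta})\supseteq C_{\alpha\beta}$ since $t(C_{\alpha\beta}) = C_{\alpha\beta^2}\subseteq C_{\alpha\beta}$, hence $t^{-1}gt\in H$. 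Condition (3), $\bigcup_{i,j\in\N} t^iHt^{-j} = G$: given $g\in\Fix(S)$, the germ $(g)_s$ is $(t)_s^{\,\ell}$ for some $\ell\in\Z$ by Proposition~\ref{prop:GroupOfGerms}, so $t^{-\ell}g$ fixes a neighborhood of $s$ pointwise; conjugating, pick $i$ large enough that $t^i(t^{-\ell}g)t^{-i}$ is the identity on $C_{\alpha\beta}$ (using that $t^i$ pushes any fixed neighborhood of $s$ inside $C_{\alpha\beta}$ for $i\gg 0$, and that $t^{-\ell}g$ already fixes a neighborhood of $s$), giving $t^{i-\ell}gt^{-i}\in H$, i.e. $g\in t^{\ell-i}Ht^{-i}$, which has the required form after relabelling (if $\ell - i\geq 0$ we are done; if $\ell - i<0$ absorb it — more carefully, one checks $g\in t^aHt^{-b}$ with $a,b\in\N$ by taking $i$ large). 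The only subtlety here is keeping track of signs and of which neighborhood is carried where; none of it is deep.

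\medskip

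\textbf{The main obstacle} is not any single step but the bookkeeping of reducing $\Fix(S)$ on $\C_{n,r}$ to a pointwise stabilizer of a smaller set on a \emph{different} Cantor space $\C_{n,q'}$ while keeping the inductive hypothesis applicable — one must check that $H$ really is of the form covered by the induction (a pointwise stabilizer of $m-1$ rational points in some $\C_{n,q'}$), which is where Corollary~\ref{cor:FixClopenSet} and the ``$q$ arbitrary'' clause in the statement do their work. A secondary point is that the base case must separately note that $V_{n,r}$ is an iterated ascending HNN extension of $V_{n,q}$ for \emph{every} $q\geq 1$; this follows by the same HNN criterion applied with $S=\{p\}$, since then $H\cong V_{n,q'}$ with $q' = \type(\C_{n,r}\setminus C_{\alpha\beta}) = r-1$, and iterating (or going the other direction by an HNN extension the other way) reaches all values of $q$ in the same residue class mod $n-1$, with the residues handled by the freedom in the \emph{number} of cones as in Proposition~\ref{prop:SameType}.
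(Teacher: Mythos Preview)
Your overall strategy---induction on $|S|$, peel off one point using the attracting generator as the stable letter, verify Proposition~\ref{prop:AlgebraicHNNCriterion}---is exactly the paper's approach, and your verification of conditions (1)--(3) is essentially correct (modulo a sign slip in (3): you want $t^{-i}(t^{-\ell}g)t^{i}\in H$, since $t^{i}(C_{\alpha\beta})=C_{\alpha\beta^{\,i+1}}$ shrinks to $\{s\}$, not $t^{-i}(C_{\alpha\beta})$, which grows).

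The one genuine gap is the point you yourself flag at the end.  With your choice $H=\{\,g\in\Fix(S):g|_{C_{\alpha\beta}}=\mathrm{id}\,\}$, the clopen set you fix always has type~$1$, so in the base case $|S|=1$ you obtain $H\cong V_{n,r-1}$, not $V_{n,q}$.  Your proposed repair---iterating or ``going the other direction''---does not work as stated: iterating the same argument inside $V_{n,r-1}$ produces $\Fix(p')$, not $V_{n,r-1}$ itself, so you cannot chain these to reach an arbitrary~$q$.  The paper sidesteps this entirely by a simple but decisive trick: instead of fixing $C_{\alpha\beta}$, it fixes a clopen set $T$ with $C_{\alpha\beta}\subseteq T\subseteq C_\alpha$ chosen so that $\type(T)\equiv r-q\pmod{n-1}$.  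Since $f^{-1}(T)\supseteq f^{-1}(C_{\alpha\beta})\supseteq C_\alpha\supseteq T$, condition~(2) still holds, and now $\type(\C_{n,r}\setminus T)\equiv q$, so in the base case $H=\Fix(T)\cong V_{n,q}$ directly by Corollary~\ref{cor:FixClopenSet}.  This one-line adjustment to the definition of $H$ is what makes the ``$q$ arbitrary'' clause go through cleanly and is the only substantive idea you are missing.
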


\begin{proof}
We prove the result by induction on~$|S|$.  Fix a point $s\in S$.  By Proposition~\ref{prop:AttractingGeneratorWithFixedSet} there exists an element $f\in\Fix(S)$ such that $(f)_s$ is the attracting generator for~$(V_{n,r})_s$. Let $C_\alpha$ be a cone containing $s$ on which $f$ acts as a prefix replacement $\alpha\psi\mapsto \alpha\beta\psi$, and note that $C_\alpha$ is disjoint from $S\setminus\{s\}$. Let $T$ be a clopen subset of $C_{\alpha}$ that contains $C_{\alpha\beta}$ and satisfies
\[
\type(T) \equiv r-q\pmod{n-1},
\]
and let $H = \Fix(S\cup T)$.  Since
\[
\type(\C_{n,r}\setminus T)\equiv q \pmod{n-1}
\]
it follows from Proposition~\ref{prop:SameType} that there exists a Thompson-like homeomorphism $k\colon \C_{n,r}\setminus T\to \C_{n,q}$, which determines an isomorphism $\Fix(T)\to V_{n,q}$ as in Corollary~\ref{cor:FixClopenSet}. If $|S|=1$, then $H=\Fix(T)$ and hence $H\cong V_{n,q}$. If $|S|\geq 2$ then this isomorphism instead maps $H$ to the subgroup $\Fix\bigl(k(S\setminus\{s\})\bigr)$ of $V_{n,q}$, which by our induction hypothesis is an iterated ascending HNN extension of $V_{n,q}$.  Thus it suffices to prove that $\Fix(S)$ is an ascending HNN extension of~$H$.

We verify the three conditions in Proposition~\ref{prop:AlgebraicHNNCriterion}.
Clearly $f$ has infinite order. Next, since $f^{-1}(T)$ contains $T$ we have that $f^{-1}Hf = \mathrm{Fix}(S\cup f^{-1}(T)) \leq H$.  Finally, if $g\in \Fix(S)$ then $(g)_s = (f)_s^i$ for some integer $i\in\Z$.  Let $U$ be a neighborhood of $s$ on which $g$ agrees with~$f^i$, and let $j\geq |i|$ so that $C_{\alpha \beta^j}\subseteq U$ and hence $f^j(T)\subseteq U$.  Since $f^{-i}g$ is the identity on $U$, it follows that $f^{-i-j}gf^j$ is the identity on~$T$.  Then  $f^{-i-j}gf^j$ lies in~$H$, and therefore $g\in f^{i+j}Hf^{-j}$.
\end{proof}

\section{The Commutator Subgroup}
\label{sec:commutator}

In this section we analyze the structure of the commutator subgroup $[\Fix(S),\Fix(S)]$ as well as the abelianization of~$\Fix(S)$.

%%%%%%%%%%%%%%%%%%%%%%%%%%%%%
%%%%%%%%%%%%%%%%%%%%%%%%%%%%%
\iffalse
%%%%%%%%%%%%%%%%%%%%%%%%%%%%%
%%%%%%%%%%%%%%%%%%%%%%%%%%%%%

\newword{pointwise stabilizer} of~$S$, denoted~$\Fix(S)$, is the subgroup of $V_{n,r}$ consisting of elements that fix $S$ pointwise.  

In this short note we analyze the algebraic structure of such pointwise stabilizers.

\begin{theorem}
If $S\subset \C_{n,r}$ is a finite set of rational points, then\/ $\Fix(S)$ is an iterated ascending HNN extension of~$V_{n,r}$.
\end{theorem}

\begin{corollary}
If $S\subset \C_{n,r}$ is a finite set of rational points, then\/ $\Fix(S)$ has type\/~$\Finfty$.
\end{corollary}

%%%%%%%%%%%%%%%%%%%%%%%%%%%%%
%%%%%%%%%%%%%%%%%%%%%%%%%%%%%
\fi
%%%%%%%%%%%%%%%%%%%%%%%%%%%%%
%%%%%%%%%%%%%%%%%%%%%%%%%%%%%

\begin{theorem}\label{thm:CommutatorSubgroup}Let $n\geq 2$ and $r\geq 1$, and let $S\subset \C_{n,r}$ be a finite set of rational points. Then
\[
[\Fix(S),\Fix(S)] = \Fix_0(S) \cap [V_{n,r},V_{n,r}].
\]
and this group is simple.
\end{theorem}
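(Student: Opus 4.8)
The plan is to prove the two equalities and the simplicity claim in sequence. First I would establish the inclusion $[\Fix(S),\Fix(S)] \subseteq \Fix_0(S)\cap[V_{n,r},V_{n,r}]$. That $[\Fix(S),\Fix(S)]\subseteq[V_{n,r},V_{n,r}]$ is automatic. To see that a commutator $[g,h]$ of elements of $\Fix(S)$ lies in $\Fix_0(S)$, use Proposition~\ref{prop:GroupOfGerms}: the group of germs $(V_{n,r})_s$ at each $s\in S$ is abelian (indeed infinite cyclic), so the germ homomorphism $\Fix(S)\to \prod_{s\in S}(V_{n,r})_s$ kills all commutators; an element with trivial germ at every point of $S$ is the identity on a neighborhood of $S$, i.e.\ lies in $\Fix_0(S)$. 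This gives one containment.

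For the reverse inclusion $\Fix_0(S)\cap[V_{n,r},V_{n,r}]\subseteq[\Fix(S),\Fix(S)]$, I would argue as follows. An element $g$ of the left-hand side is supported on a clopen set $E$ disjoint from $S$; choose a clopen neighborhood $N$ of $S$ disjoint from $E$ and with proper nonempty complement. Now $g$ lies in $\Fix(\C_{n,r}\setminus(\C_{n,r}\setminus N)) = \Fix_0(S)$-type subgroup, which by Corollary~\ref{cor:FixClopenSet} is isomorphic to $V_{n,q}$ where $q = \type(\C_{n,r}\setminus N)$, and this subgroup sits inside $\Fix(S)$. The subtlety is that $g$ need not lie in the \emph{commutator} subgroup of that copy of $V_{n,q}$, only in the commutator subgroup of the ambient $V_{n,r}$. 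Here I would use Higman's theorem that $[V_{n,r},V_{n,r}]$ is generated by elements supported on small cones (equivalently, that $g\in[V_{n,r},V_{n,r}]$ iff $\type(\supp g)\equiv 0\pmod{n-1}$ when restricted appropriately) together with a ``shrinking'' or ``displacement'' commutator trick: since $\C_{n,r}\setminus N$ is large enough to contain two disjoint clopen copies of $E$, one can find $a,b\in\Fix(S)$ supported in $\C_{n,r}\setminus N$ with $g=[a,b]$, by the standard argument that in a sufficiently rich Thompson-like group an element supported on $E$ which is "balanced" (type zero) is a single commutator when $E$ fits three disjoint times into the ambient space. This realizes $g$ as a commutator of two elements of $\Fix(S)$.

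Finally, for simplicity of $K := [\Fix(S),\Fix(S)] = \Fix_0(S)\cap[V_{n,r},V_{n,r}]$: I would show $K$ equals the group $E(S)$ of elements of $V_{n,r}$ supported on a clopen set disjoint from $S$ whose support has type divisible by $n-1$, and then show this group is simple directly, mimicking Higman's proof that $[V_{n,r},V_{n,r}]$ is simple. Concretely, given a nontrivial normal subgroup $M\trianglelefteq K$ and $1\neq m\in M$, pick a small cone $C$ moved off itself by $m$ with $\overline{C}$ and $m(\overline{C})$ both disjoint from $S$; then conjugation commutators $[m, t]$ for $t\in K$ supported near $C$ produce, by the usual ping-pong/commutator manipulations, every "elementary" element of $K$ supported on a small cone away from $S$, and these generate $K$. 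The fact that $S$ is finite guarantees there is always room (a cone disjoint from $S$) to run these arguments.

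The main obstacle I expect is the reverse inclusion in the first equality — precisely, upgrading ``$g\in[V_{n,r},V_{n,r}]$ and $g$ supported off $S$'' to ``$g$ is a product of commutators of elements that themselves fix $S$ pointwise.'' The type-parity constraint from Subsection~\ref{ssec:types} is what controls this, and one must check carefully that removing a clopen neighborhood of $S$ does not obstruct the commutator-realization because the complement still has enough room (type at least... always enough, since $n\geq 2$ and one can subdivide). Handling the edge case $r=1$, $q$ small, and verifying the type bookkeeping is where the real work lies.
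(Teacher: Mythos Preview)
Your argument for the inclusion $[\Fix(S),\Fix(S)]\subseteq \Fix_0(S)\cap[V_{n,r},V_{n,r}]$ via the germ homomorphism is exactly what the paper does.

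For the reverse inclusion and for simplicity, however, the paper takes a much shorter route that bypasses all the difficulties you anticipate. Rather than trying to exhibit a given $g$ as a commutator via displacement tricks, the paper writes $\Fix_0(S)$ as an ascending union $\bigcup_i G_i$, where $G_i$ is the group of elements supported on a clopen set $E_i$ with $\bigcup_i E_i=\C_{n,r}\setminus S$; each $G_i$ is isomorphic to some $V_{n,t_i}$ by Corollary~\ref{cor:FixClopenSet}. The key step is the equality $[G_i,G_i]=G_i\cap[V_{n,r},V_{n,r}]$, and this is handled by a pure index argument: for $n$ even both sides equal $G_i$; for $n$ odd one uses Higman's fact that every involution lies in the nontrivial coset of $[V_{n,r},V_{n,r}]$, so $G_i\cap[V_{n,r},V_{n,r}]$ is a proper subgroup of $G_i$ containing the index-two subgroup $[G_i,G_i]$, hence equals it. Taking the union over $i$ gives the reverse inclusion immediately, and simplicity then drops out because $[\Fix(S),\Fix(S)]=\bigcup_i[G_i,G_i]$ is an ascending union of simple groups.

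Your proposed route could perhaps be pushed through, but two points deserve caution. First, the criterion you invoke, ``$g\in[V_{n,r},V_{n,r}]$ iff $\type(\supp g)\equiv 0\pmod{n-1}$,'' is not the correct description of Higman's abelianization map for odd $n$: that map is a permutation-parity invariant on the prefix codes, not a type-of-support invariant, and in any case $\supp g$ need not be clopen. Second, realizing $g$ as a \emph{single} commutator $[a,b]$ with $a,b\in\Fix(S)$ is stronger than required and needs a genuine argument; the ``fits three times'' displacement heuristic is suggestive but not a proof here. Your direct simplicity argument would also be substantially more work than the one-line ascending-union observation. The paper's approach is both shorter and avoids exactly the edge-case bookkeeping you flag at the end.
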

\begin{proof}
Let $S=\{s_1,\ldots,s_m\}$, and let $\pi\colon \Fix(S)\to \prod_{i=1}^m (V_{n,r})_{s_i}$ be the homomorphism defined by
$\pi(f) = \bigl((f)_{s_1},\ldots,(f)_{s_m}\bigr)$.
Then the kernel of $\pi$ is $\Fix_0(S)$ and the codomain is a free abelian group, so it follows that $[\Fix(S),\Fix(S)]\subseteq \Fix_0(S)$, and hence
$[\Fix(S),\Fix(S)]\subseteq \Fix_0(S)\cap [V_{n,r},V_{n,r}]$.

For the opposite inclusion, let $E_1\subseteq E_2\subseteq \cdots$ be an ascending sequence of nonempty clopen sets whose union is $\C_{n,r}\setminus S$, and let $G_i$ denote the subgroup of $V_{n,r}$ consisting of elements that are supported on~$E_i$.  Note that $\Fix_0(S)$ is the ascending union of the subgroups~$G_i$, and that each $G_i$ is isomorphic to $V_{n,t_i}$, where $t_i=\type(E_i)$.  We claim that $[G_i,G_i]=[V_{n,r},V_{n,r}]\cap G_i$ for each~$i$.

The inclusion $[G_i,G_i]\subseteq [V_{n,r},V_{n,r}]\cap G_i$ is clear.  For the opposite inclusion, if $n$ is even then $[G_i,G_i]=G_i$ and $[V_{n,r},V_{n,r}]=V_{n,r}$ and we are done.  If $n$ is odd, then $[V_{n,r},V_{n,r}]$ has index two in $V_{n,r}$, and its complement contains all elements of order two
(see \cite[Chapter~5]{Hig2}).  Then any element of order two in $G_i$ is not contained in $[V_{n,r},V_{n,r}]$, which means that $[V_{n,r},V_{n,r}]\cap G_i$ is a proper subgroup of~$G_i$.  Since $[G_i,G_i]$ has index two in $G_i$ and $[G_i,G_i]\subseteq [V_{n,r},V_{n,r}]\cap G_i$, it follows that $[G_i,G_i]=[V_{n,r},V_{n,r}]\cap G_i$.

We conclude that
\[
\Fix_0(S)\cap [V_{n,r},V_{n,r}]=\bigcup_{i\in\mathbb{N}} G_i\cap [V_{n,r},V_{n,r}] = \bigcup_{i\in\mathbb{N}} [G_i,G_i] \subseteq [\Fix(S),\Fix(S)]
\]
and therefore $[\Fix(S),\Fix(S)]=\Fix_0(S)\cap [V_{n,r},V_{n,r}]$.  Moreover, since each $G_i$ is isomorphic to $V_{n,q}$ for some $q\geq 1$, each of the commutator subgroups $[G_i,G_i]$ is simple~\cite{Hig2}, so the ascending union $[\Fix(S),\Fix(S)]$ must be simple as well.
\end{proof}

\begin{theorem}
\label{thm:Abelianization}Let $S\subseteq \C_{n,r}$ be a finite set of rational points.  Then
\[
\Fix(S)\bigr/[\Fix(S),\Fix(S)] \cong \begin{cases}\Z^{|S|} & \text{if $n$ is even,} \\ \Z^{|S|}\oplus \Z_2 & \text{if $n$ is odd.}\end{cases}
\]
\end{theorem}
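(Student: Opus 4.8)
The plan is to build on Theorem~\ref{thm:CommutatorSubgroup}, which identifies $[\Fix(S),\Fix(S)]$ as $\Fix_0(S)\cap[V_{n,r},V_{n,r}]$, and to compute the quotient directly via the homomorphism $\pi$ introduced in that proof. Recall $\pi\colon\Fix(S)\to\prod_{i=1}^m(V_{n,r})_{s_i}$ sending $f$ to the tuple of germs, with kernel exactly $\Fix_0(S)$; by Proposition~\ref{prop:GroupOfGerms} each $(V_{n,r})_{s_i}$ is infinite cyclic, so the codomain is $\Z^{|S|}$. First I would observe that $\pi$ is surjective: by Corollary~\ref{cor:AttractingAtEachPoint} (or more precisely Proposition~\ref{prop:AttractingGeneratorWithFixedSet}, which produces, for each $s_i$, an element of $\Fix_0(S\setminus\{s_i\})$ whose germ at $s_i$ is the attracting generator), the image of $\pi$ contains each standard basis vector of $\Z^{|S|}$, hence is everything.

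Thus $\pi$ induces a short exact sequence $1\to\Fix_0(S)\to\Fix(S)\to\Z^{|S|}\to 1$. Since $\Z^{|S|}$ is free abelian, this sequence splits, giving $\Fix(S)\cong\Fix_0(S)\rtimes\Z^{|S|}$; concretely, pick elements $g_1,\dots,g_m\in\Fix(S)$ with $\pi(g_i)=e_i$ (the $g_i$ from Corollary~\ref{cor:AttractingAtEachPoint} work) and let the splitting send $e_i\mapsto g_i$. Abelianizing a semidirect product $N\rtimes A$ with $A$ abelian gives $(N/[N,N][A,N])\oplus A$, so I would compute $\Fix(S)^{\mathrm{ab}}\cong \bigl(\Fix_0(S)\big/ [\Fix(S),\Fix(S)]\bigr)\oplus\Z^{|S|}$, using that $[\Fix(S),\Fix(S)]$ is generated by $[\Fix_0(S),\Fix_0(S)]$ together with the commutators $[g_i,n]$, $n\in\Fix_0(S)$.

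Next I would identify $\Fix_0(S)\big/[\Fix(S),\Fix(S)]$. By Theorem~\ref{thm:CommutatorSubgroup} this is $\Fix_0(S)\big/\bigl(\Fix_0(S)\cap[V_{n,r},V_{n,r}]\bigr)$, which injects into $V_{n,r}\big/[V_{n,r},V_{n,r}]$. When $n$ is even this last group is trivial (Higman, \cite[Chapter~5]{Hig2}), so $\Fix_0(S)=[\Fix(S),\Fix(S)]$ and the abelianization is $\Z^{|S|}$. When $n$ is odd, $V_{n,r}^{\mathrm{ab}}\cong\Z_2$, so the quotient $\Fix_0(S)\big/[\Fix(S),\Fix(S)]$ is a subgroup of $\Z_2$; it remains to check it is all of $\Z_2$, i.e.\ that $\Fix_0(S)$ is not contained in $[V_{n,r},V_{n,r}]$. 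For this I would exhibit an element of $\Fix_0(S)$ of order two, which is automatically outside $[V_{n,r},V_{n,r}]$ since (again by \cite[Chapter~5]{Hig2}) all involutions lie in the nontrivial coset: take a transposition of two disjoint cones both contained in a clopen neighborhood of the complement of $S$ — such cones exist since $\C_{n,r}\setminus S$ is a nonempty open set. Then $\Fix_0(S)\big/[\Fix(S),\Fix(S)]\cong\Z_2$ and the abelianization is $\Z^{|S|}\oplus\Z_2$.

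The main obstacle is the bookkeeping in the second paragraph: justifying that abelianizing the semidirect product kills exactly $[\Fix_0(S),\Fix_0(S)]$ together with the extra commutators $[g_i,n]$, and checking that the resulting "twisted abelianization" $\Fix_0(S)\big/\langle[\Fix_0(S),\Fix_0(S)],\,[g_i,n]\rangle$ really coincides with $\Fix_0(S)\big/[\Fix(S),\Fix(S)]$ — which in fact follows immediately from Theorem~\ref{thm:CommutatorSubgroup}, so an alternative and cleaner route is to bypass the semidirect-product language entirely: from $1\to\Fix_0(S)\to\Fix(S)\xrightarrow{\pi}\Z^{|S|}\to 1$ and the splitting, one gets $\Fix(S)^{\mathrm{ab}}$ sits in an exact sequence with $\Z^{|S|}$ as a retract, and the kernel is the image of $\Fix_0(S)^{\mathrm{ab}}$ under coinvariants; since Theorem~\ref{thm:CommutatorSubgroup} already pins down $[\Fix(S),\Fix(S)]$ on the nose, one simply computes $\Fix(S)/[\Fix(S),\Fix(S)]$ by noting its two "coordinates": the germ data giving $\Z^{|S|}$ and the residual $V_{n,r}$-abelianization contribution giving $0$ or $\Z_2$. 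I would write up whichever of these is shortest once the details are in hand.
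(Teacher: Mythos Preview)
Your proposal is essentially correct and, in its final ``cleaner route,'' converges on exactly what the paper does: the paper defines the product homomorphism $\sigma=\pi\times\rho\colon \Fix(S)\to \prod_i (V_{n,r})_{s_i}\times A$ (where $\rho$ is the abelianization map of $V_{n,r}$), observes that $\ker\sigma=\Fix_0(S)\cap[V_{n,r},V_{n,r}]=[\Fix(S),\Fix(S)]$ by Theorem~\ref{thm:CommutatorSubgroup}, and then checks surjectivity --- $\pi$ surjects via Proposition~\ref{prop:AttractingGeneratorWithFixedSet}, and for odd $n$ one exhibits an involution in $\Fix_0(S)$ exactly as you do. So your endpoint and the paper's proof coincide; the semidirect-product detour is unnecessary.

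One genuine slip in the detour: the sentence ``Since $\Z^{|S|}$ is free abelian, this sequence splits'' is not a valid inference. Extensions with free \emph{abelian} quotient need not split (the integer Heisenberg group is a non-split extension of $\Z$ by $\Z^2$); you need a free quotient, or an explicit section. In your situation a section does exist, but you have to arrange it: choose pairwise disjoint clopen neighborhoods $U_i\ni s_i$ and take each $g_i$ supported in $U_i$ (the construction in Proposition~\ref{prop:AttractingGeneratorWithFixedSet} allows this), so that the $g_i$ commute. The $f_i$ produced verbatim by that proposition, or by Corollary~\ref{cor:AttractingAtEachPoint}, are not a priori disjointly supported, so ``the $g_i$ from Corollary~\ref{cor:AttractingAtEachPoint} work'' needs this extra sentence. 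Once you have the splitting, your abelianization computation is fine --- but as you yourself note, it is simpler to skip it and run the $\pi\times\rho$ argument directly.
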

\begin{proof}
Let $S=\{s_1,\ldots,s_m\}$ and let $\pi\colon \Fix(S)\to \prod_{i=1}^m(V_{n,r})_{s_i}$ be the homomorphism defined by \[
\pi(f)=\bigl((f)_{s_1},\ldots,(f)_{s_m}\bigr).
\]
Let $A = V_{n,r}/[V_{n,r},V_{n,r}]$, and let $\rho\colon V_{n,r}\to A$ be the quotient homomorphism.   Note that $\prod_{i=1}^m (V_{n,r})_{s_i}\cong \Z^m$, and that $A$ is trivial if $n$ is even and $A\cong \Z_2$ if $n$ is odd (see \cite[Chapter~5]{Hig2}).  Let $\sigma \colon \Fix(S)\to \prod_{i=1}^m (V_{n,r})_{s_i}\times A$ be the homomorphism $\pi\times \rho$.  Then \[
\ker(\sigma)=\ker(\pi)\cap \ker(\rho) = \Fix_0(S)\cap [V_{n,r},V_{n,r}] = [\Fix(S),\Fix(S)]
\]
by Theorem~\ref{thm:CommutatorSubgroup}, so it suffices to prove that $\sigma$ is surjective.

By Proposition~\ref{prop:AttractingGeneratorWithFixedSet}, there exists for each~$i$ an element $f_i\in \Fix_0(S\setminus\{s_i\})$ so that $(f_i)_{s_i}$ is a generator for $(V_{n,r})_{s_i}$.  Then $\pi$ maps $f_1,\ldots,f_m$ to a generating set for $\prod_{i=1}^m (V_{n,r})_{s_i}$, and therefore $\pi$ is surjective. Thus, all the remains is to prove that $\Fix_0(S)=\ker(\pi)$ maps onto $A$ under~$\rho$.

If $n$ is even then $A$ is trivial and we are done.  If $n$ is odd, then any element of order two in $V_{n,r}$ maps to the nontrivial element of $A$ under~$\rho$.  In particular, if we choose disjoint clopen sets $E,E'$ of the same type that are disjoint from $S$ and let $h\colon E\to E'$ be a Thompson-like homeomorphism, then the element $f\in V_{n,r}$ that agrees with $h$ on~$E$, agrees with $h^{-1}$ on~$E'$, and is the identity elsewhere lies in $\Fix_0(S)$ and maps to the nontrivial element of $A$ under~$\rho$.
\end{proof}

Finally, we are ready to prove the following

\begin{theorem2}\label{thm:Isomorphism}
Let $n\geq 2$, let $q,r\geq 1$, and let $S\subset \C_{n,q}$ and $S'\subset \C_{n,r}$ be finite sets of rational points.  Then\/ $\Fix(S)$ is isomorphic to\/ $\Fix(S')$ if and only if\/ $|S|=|S'|$.
\end{theorem2}
\begin{proof}
If $\Fix(S)\cong \Fix(S')$ then $|S|=|S'|$ by Theorem~\ref{thm:Abelianization}.  For the converse, suppose $|S|=|S'|$, and choose a bijection $\varphi\colon S\to S'$.

By Corollary~\ref{cor:AttractingAtEachPoint}, there exists an $f\in\Fix(S)$ so that for each $s\in S$, the germ $(f)_s$ is the attracting generator for $(V_{n,q})_s$.  For each $s\in S$, choose a cone containing $s$ on which $f$ acts as a prefix replacement, and let $E$ be the complement of the union of these cones. Shrinking the cones if necessary, we may assume that $E$ is nonempty.  Note then that $E$ is clopen, $f(E)\supset E$, and $\bigcup_{n\in\N} f^n(E) = \C_{n,q}\setminus S$.

Repeating the last paragraph for $S'$, we obtain an element $f'\in \Fix(S')$ and a clopen set $E'\subset \C_{n,r}$ so that $(f')_s$ is the attracting generator for $(V_{n,r})_s$ for each $s\in S'$, and moreover $f'(E')\supset E'$ and $\bigcup_{n\in\N} (f')^n(E') = \C_{n,r}\setminus S'$.  Replacing $E'$ by a larger clopen subset of $f(E')$ if necessary, we may assume that $\type(E')=\type(E)$.

Now, observe that $\C_{n,q}\setminus S$ is the disjoint union of $E$ and the sets 
\[
E_k=f^k(E)\setminus f^{k-1}(E)
\]
for $k\geq 1$.  Similarly, $\C_{n,r}\setminus S'$ is the disjoint union of $E'$ and analogous sets $E'_k$ ($k\geq 1$).  Note that $\type(E_k)=\type(E_k')=0$ for each $k\geq 1$.  Choose Thompson-like homeomorphisms $h_0\colon E\to E'$ and $h_1\colon E_1\to E_1'$, and let $h\colon \C_{n,q}\to \C_{n,r}$ be the homeomorphism that agrees with $\varphi$ on~$S$, agrees with $h_0$ on $E$, and agrees with $(f')^{n-1}h_1 f^{1-n}$ on $E_n$ for each $n\geq 1$.  We claim that $h\,\Fix(S)\,h^{-1} = \Fix(S')$, and hence $\Fix(S)\cong \Fix(S')$.

By symmetry, it suffices to prove that $h\,\Fix(S)\,h^{-1}\subseteq \Fix(S')$. If $g\in \Fix(S)$, then clearly $hgh^{-1}$ fixes $S'$ pointwise, so it suffices to prove that $hgh^{-1}$ is an element of~$V_{n,r}$.  Since $h$ restricts to a Thompson-like homeomorphism  $\C_{n,q}\setminus S\to \C_{n,r}\setminus S'$, it suffices to prove that $hgh^{-1}$ is Thompson-like in a neighborhood of~$S'$.

To see this, observe first that $hfh^{-1}$ agrees with $f'$ on $\C_{n,r}\setminus E'$.  More generally, for every $k\in\Z$ the functions $hf^kh^{-1}$ and $(f')^k$ agree in a neighborhood of~$S'$.  For each $s\in S$, we know that $(f)_s$ is a generator for the infinite cyclic group $(V_{n,q})_s$, so $g$ must agree with some power of $f$ in a neighborhood of~$s$.  It follows that $hgh^{-1}$ agrees with some power of $f'$ in a neighborhood of~$h(s)$, and therefore $hgh^{-1}$ is Thompson-like in a neighborhood of~$h(s)$.
\end{proof}

\begin{remark}
Note that $\Fix_0(S)\cong \Fix_0(S')$ for any finite sets $S$ and $S'$ (regardless of size), since any Thompson-like homeomorphism $\C_{n,q}\setminus S\to \C_{n,r}\setminus S'$ conjugates $\Fix_0(S)$ to $\Fix_0(S')$.  (More generally, if $U\subset \C_{n,q}$ and $U'\subset \C_{n,r}$ are nonempty, \mbox{non-compact} open sets, then the subgroup of $V_{n,q}$ consisting of elements supported on a compact subset of~$U$ is isomorphic to the subgroup of $V_{n,r}$ consisting of elements supported on a compact subset of~$U'$.)
Since $[\Fix(S),\Fix(S)]$ is the commutator subgroup of $\Fix_0(S)$, it follows that  $[\Fix(S),\Fix(S)]\cong [\Fix(S'),\Fix(S')]$ for any nonempty finite sets $S\subset \C_{n,q}$ and~$S'\subset \C_{n,r}$.
\end{remark}

\bigskip
\newcommand{\arXiv}[1]{\href{https://arxiv.org/abs/#1}{arXiv}}
\newcommand{\doi}[1]{\href{https://doi.org/#1}{Crossref\,}}
\bibliographystyle{plain}

\end{document}